\documentclass[12pt]{article}%
\usepackage{amsmath}
\usepackage{amsfonts}
\usepackage{amssymb}
\usepackage{graphicx}%
\setcounter{MaxMatrixCols}{30}
%TCIDATA{OutputFilter=latex2.dll}
%TCIDATA{Version=5.50.0.2890}
%TCIDATA{CSTFile=40 LaTeX article.cst}
%TCIDATA{Created=Thursday, February 09, 2012 12:28:57}
%TCIDATA{LastRevised=Thursday, May 24, 2012 10:05:36}
%TCIDATA{<META NAME="GraphicsSave" CONTENT="32">}
%TCIDATA{<META NAME="SaveForMode" CONTENT="1">}
%TCIDATA{BibliographyScheme=Manual}
%TCIDATA{<META NAME="DocumentShell" CONTENT="Standard LaTeX\Blank - Standard LaTeX Article">}
%TCIDATA{Language=American English}
%BeginMSIPreambleData
\providecommand{\U}[1]{\protect\rule{.1in}{.1in}}
%EndMSIPreambleData
\newtheorem{theorem}{Theorem}

\newtheorem{algorithm}[theorem]{Algorithm}

\newtheorem{condition}[theorem]{Condition}

\newtheorem{corollary}[theorem]{Corollary}

\newtheorem{definition}[theorem]{Definition}

\newtheorem{lemma}[theorem]{Lemma}

\newtheorem{proposition}[theorem]{Proposition}

\newenvironment{proof}[1][Proof]{\noindent\textbf{#1.} }{\ \rule{0.5em}{0.5em}}
\begin{document}

\title{\textbf{Convergence and Perturbation Resilience of Dynamic String-Averaging
Projection Methods}}
\author{Yair Censor$^{1}$ and Alexander J. Zaslavski$^{2}\bigskip$\\$^{1}$Department of Mathematics, University of Haifa\\Mt.\ Carmel, Haifa 31905, Israel\\(yair@math.haifa.ac.il) \bigskip\\$^{2}$Department of Mathematics\\The Technion -- Israel Institute of Technology\\Technion City, Haifa 32000, Israel\\(ajzasl@techunix.technion.ac.il)}
\date{February 29, 2012. Revised: May 24, 2012.}
\maketitle

\begin{abstract}
We consider the convex feasibility problem (CFP) in Hilbert space and
concentrate on the study of string-averaging projection (SAP) methods for the
CFP, analyzing their convergence and their perturbation resilience. In the
past, SAP methods were formulated with a single predetermined set of strings
and a single predetermined set of weights. Here we extend the scope of the
family of SAP methods to allow iteration-index-dependent variable strings and
weights and term such methods dynamic string-averaging projection (DSAP)
methods. The bounded perturbation resilience of DSAP methods is relevant and
important for their possible use in the framework of the recently developed
superiorization heuristic methodology for constrained minimization problems.

\end{abstract}

\textbf{Keywords and phrases}: Dynamic string-averaging, projection methods,
Perturbation resilience, fixed point, Hilbert space, metric projection,
nonexpansive operator, superiorization method, variable strings, variable
weights.\bigskip

\textbf{2010 Mathematics Subject Classification (MSC)}: 65K10, 90C25

\section{Introduction\label{sect:intro}}

In this paper we consider the \textit{convex feasibility problem} (CFP) in
Hilbert space $H.$ Let $C_{1},C_{2},\dots,C_{m}$ be nonempty closed convex
subsets of $H,$ where $m$ is a natural number, and define%
\begin{equation}
C:=\cap_{i=1}^{m}C_{i}. \label{eq:1.1}%
\end{equation}

Assuming consistency, i.e., that $C\not =\emptyset,$ the CFP requires to find
an element $x^{\ast}\in C.$ We concentrate on the study of
\textit{string-averaging projection} (SAP) \textit{methods} for the CFP and
analyze their convergence and their perturbation resilience. SAP methods were
first introduced in \cite{ceh01} and subsequently studied further in
\cite{bmr04,bdhk07,CS08,CS09,ct03,crombez}, see also \cite[Example 5.20]%
{BC11}. They were also employed in applications \cite{pen09,rhee03}.

The class of \textit{projection methods} is understood here as the class of
methods that have the property that they can reach an aim related to the
family of sets $\{C_{1},C_{2},\dots,C_{m}\},$ such as, but not only, solving
the CFP, or solving an optimization problem with these sets as constraints, by
performing projections (orthogonal, i.e., least Euclidean distance, or others)
onto the individual sets $C_{i}.$ The advantage of such methods occurs in
situations where projections onto the individual sets are computationally
simple to perform. Such methods have been in recent decades extensively
investigated mathematically and used experimentally with great success on some
huge and sparse real-world applications, consult, e.g.,
\cite{bb96,cccdh10,combettes01,combettes04} and the books
\cite{BC11,byrnebook,CEG12,CZ97,chinneck-book,ER11,galantai,GTH}.

Within the class of projection methods, SAP methods do not represent a single
algorithm but rather, what might be called, an \textit{algorithmic scheme,
}which means that by making a specific choice of strings and weights in SAP,
along with choices of other parameters in the scheme, a deterministic
algorithm for the problem at hand can be obtained.

In all these works, SAP methods were formulated with a single predetermined
set of strings and a single predetermined set of weights. Here we extend the
scope of the family of SAP methods to allow iteration-index-dependent variable
strings and weights. We term such SAP methods \textit{dynamic string-averaging
projection} (DSAP) \textit{methods.} This is reminiscence of the analogous
development of \textit{block-iterative projection} (BIP) \textit{methods} for
the CFP wherein iteration-index-dependent variable blocks and weights are
permitted \cite{ac89}. For such DSAP methods we prove here convergence and
bounded perturbation resilience.

The significance of DSAP in practice cannot be exaggerated. SAP methods, in
their earlier non-dynamic versions, have been applied to the important
real-world application of \textit{proton Computerized Tomography} (pCT), see,
e.g., \cite{pscr10, pen09}, which presents a computationally huge-size
problem. The efforts to use parallel computing for the application of SAP to
pCT is ongoing and will benefit from the DSAP. This is so because the
flexibility of varying string lengths, string members and weights dynamically
has direct bearing on load balancing between processors that run in parallel
and should be loaded in a way that will minimize idle time of processors that
await others to complete their jobs. Such experimental work will hopefully see
light elsewhere.

The so extended DSAP algorithmic scheme is presented in Section
\ref{sect:SAPv} and the convergence analysis of it is done in Section
\ref{sect:convergence}. In Section \ref{sect:perturbations} we quote the
definition of bounded perturbation resilience and prove that the DSAP with
iteration-index-dependent variable strings and weights is bounded perturbation
resilient. There we also comment about the importance and relevance of this
bounded perturbation resilience to the recently developed superiorization
heuristic methodology.

\section{The string-averaging projection method and the dynamic SAP with
variable strings and weights\label{sect:SAPv}}

Let $H$ be a real Hilbert space with inner product $\left\langle \cdot
,\cdot\right\rangle $ and induced norm $||\cdot||$. Originally,
string-averaging is more general than SAP because it can employ operators
other than projections and convex combinations. But, on the other hand, it is
formulated for fixed strings as follows. Let the \textit{string} $I_{t}$ be an
ordered subset of $\{1,2,\dots,m\}$ of the form%
\begin{equation}
I_{t}=(i_{1}^{t},i_{2}^{t},\dots,i_{m(t)}^{t}),\label{block}%
\end{equation}
with $m(t)$ the number of elements in $I_{t},$ for $t=1,2,\dots,M,$ where $M$
is the number of strings. Suppose that there is a set $S\subseteq H$ such that
there are operators $R_{1},R_{2},\dots,R_{m}$ mapping $S$ into $S$ and an
additional operator $R$ which maps $S^{M}$ into $S$.

\begin{algorithm}
\textbf{The string-averaging algorithmic scheme of \cite{ceh01}}

\textbf{Initialization}: $x^{0}\in S$ is arbitrary.

\textbf{Iterative Step}: given the current iterate $x^{k}$,

(i) calculate, for all $t=1,2,\dots,M,$%
\begin{equation}
T_{t}(x^{k})=R_{i_{m(t)}^{t}}\cdots R_{i_{2}^{t}}R_{i_{1}^{t}}(x^{k}),
\label{as1}%
\end{equation}
(ii) and then calculate%
\begin{equation}
x^{k+1}=R(T_{1}(x^{k}),T_{2}(x^{k}),\ldots,T_{M}(x^{k})). \label{as2}%
\end{equation}

\end{algorithm}

For every $t=1,2,\dots,M,$ this algorithmic scheme applies to $x^{k}$
successively the operators whose indices belong to the $t$-th string. This can
be done in parallel for all strings and then the operator $R$ maps all
end-points onto the next iterate $x^{k+1}$. This is indeed an algorithm
provided that the operators $\{R_{i}\}_{i=1}^{m}$ and $R$ all have algorithmic
implementations. In this framework we get a \textit{sequential algorithm} by
the choice $M=1$ and $I_{1}=(1,2,\dots,m)$ and a \textit{simultaneous
algorithm} by the choice $M=m$ and $I_{t}=(t),\ t=1,2,\dots,M.$

Now we proceed to construct our proposed DSAP method with variable strings and
weights. For each $x\in H$, nonempty set $E\subseteq H$ and $r>0$ define the
distance%
\begin{equation}
d(x,E)=\inf\{||x-y||\mid\;y\in E\}
\end{equation}
and the closed ball%
\begin{equation}
B(x,r)=\{y\in H\mid\;||x-y||\leq r\}.
\end{equation}

The following proposition and corollary are well-known.

\begin{proposition}
If $D$ be a nonempty closed convex subset of $H$ then for each $x\in H$ there
is a unique point $P_{D}(x)\in D,$ called \texttt{the projection of }%
$x$\texttt{ onto }$D,$ satisfying%
\begin{equation}
||x-P_{D}(x)||=\inf\{||x-y||\mid\ y\in D\}.
\end{equation}
Moreover,%
\begin{equation}
||P_{D}(x)-P_{D}(y)||\leq||x-y||\text{ for all }x,y\in H,
\end{equation}
and for each $x\in H$ and each $z\in D$,%
\begin{equation}
\left\langle z-P_{D}(x),x-P_{D}(x)\right\rangle \leq0.
\end{equation}

\end{proposition}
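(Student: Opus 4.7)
The plan is to prove the three conclusions in the order (existence/uniqueness), (variational inequality), (nonexpansiveness), because the variational characterization is the clean tool that makes the nonexpansive estimate immediate.

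First, I would establish existence and uniqueness of $P_D(x)$ by a standard Cauchy-sequence argument based on the parallelogram identity. Set $d:=\inf\{\|x-y\|\mid y\in D\}$ and take a minimizing sequence $(y_n)\subseteq D$ with $\|x-y_n\|\to d$. Applying the parallelogram identity to $x-y_n$ and $x-y_k$ gives
\begin{equation}
\|y_n-y_k\|^2 = 2\|x-y_n\|^2+2\|x-y_k\|^2-4\Big\|x-\tfrac{y_n+y_k}{2}\Big\|^2,
\end{equation}
and convexity of $D$ forces $(y_n+y_k)/2\in D$, so the last term is $\ge 4d^2$. Hence $(y_n)$ is Cauchy, its limit lies in the closed set $D$, and attains $d$; this gives existence. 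Uniqueness follows from the same identity applied to two putative minimizers $p_1,p_2$: their midpoint belongs to $D$, so $\|p_1-p_2\|^2\le 0$. Thus $P_D(x)$ is well-defined.

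Next I would prove the variational inequality. For fixed $z\in D$ and $t\in(0,1]$, convexity gives $P_D(x)+t(z-P_D(x))\in D$, so by the minimality of $\|x-P_D(x)\|$,
\begin{equation}
\|x-P_D(x)\|^2 \le \|x-P_D(x)-t(z-P_D(x))\|^2.
\end{equation}
Expanding the right-hand side, cancelling, dividing by $t>0$, and sending $t\downarrow 0$ yields $\langle z-P_D(x),x-P_D(x)\rangle\le 0$.

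Finally, nonexpansiveness falls out by applying the variational inequality twice. For $x,y\in H$ put $u=P_D(x)$, $v=P_D(y)$; using $v\in D$ in the inequality for $x$ and $u\in D$ in the inequality for $y$, then adding, gives $\langle v-u,(x-y)-(u-v)\rangle\le 0$, equivalently $\|u-v\|^2\le\langle u-v,x-y\rangle$, and Cauchy--Schwarz finishes the proof. The only genuinely non-routine step is the Cauchy argument for existence, where it is essential to use the Hilbert-space parallelogram identity (the statement fails in general Banach spaces); everything else is a short deduction from convexity plus the minimality property.
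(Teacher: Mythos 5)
Your proof is correct and complete: the parallelogram-identity argument for existence and uniqueness, the variational inequality obtained by perturbing along the segment toward $z$, and the derivation of nonexpansiveness by adding the two variational inequalities and applying Cauchy--Schwarz are all carried out properly. The paper itself states this proposition as well-known and gives no proof, so there is nothing to compare against; your argument is the standard one found in the textbook references (e.g.\ Bauschke--Combettes) and fills the gap faithfully.
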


\begin{corollary}
\label{corr:1.1}If $D$ be a nonempty closed convex subset of $H$ then for each
$x\in H$ and each $z\in D$,%
\begin{equation}
||z-P_{D}(x)||^{2}+||x-P_{D}(x)||^{2}\leq||z-x||^{2}.
\end{equation}

\end{corollary}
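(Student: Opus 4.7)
The plan is to derive the claimed inequality directly from the variational characterization of the projection stated in the previous proposition, namely
\[
\langle z - P_{D}(x),\, x - P_{D}(x)\rangle \leq 0 \qquad \text{for all } x\in H,\ z\in D.
\]
No extra machinery should be needed; the identity
\[
\|z-x\|^{2} = \|(z - P_{D}(x)) + (P_{D}(x) - x)\|^{2}
\]
combined with the polarization/expansion of the inner product is the engine of the proof.

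First, I would fix $x\in H$ and $z\in D$, write $p:= P_{D}(x)$ for brevity, and expand
\[
\|z-x\|^{2} = \|z-p\|^{2} + 2\langle z-p,\, p-x\rangle + \|p-x\|^{2}.
\]
Next, I would rewrite the cross term as
\[
2\langle z-p,\, p-x\rangle = -2\langle z-p,\, x-p\rangle,
\]
so that the previous proposition applies directly to show that this cross term is nonnegative. Substituting back, I obtain
\[
\|z-x\|^{2} \geq \|z-p\|^{2} + \|p-x\|^{2},
\]
which is precisely the claimed inequality once $p$ is replaced by $P_{D}(x)$.

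There is no real obstacle here: the entire argument rests on the single variational inequality quoted from the previous proposition, together with the elementary expansion of $\|a+b\|^{2}$ in a Hilbert space. The only thing to be a bit careful about is the sign bookkeeping when switching between $\langle z-p,\, p-x\rangle$ and $\langle z-p,\, x-p\rangle$, so that the inequality is applied in the correct direction. Once that is handled, the corollary follows in two lines.
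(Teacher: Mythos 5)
Your proof is correct: expanding $\|z-x\|^{2}=\|(z-P_{D}(x))+(P_{D}(x)-x)\|^{2}$ and discarding the nonnegative cross term via the variational inequality $\langle z-P_{D}(x),\,x-P_{D}(x)\rangle\leq 0$ is exactly the standard derivation. The paper states this corollary without proof (calling it well-known), and your two-line argument is the one it implicitly relies on, so there is nothing to add.
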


For $i=1,2,\dots,m,$ we denote $P_{i}=P_{C_{i}}.$ An \textit{index vector} is
a vector $t=(t_{1},t_{2},\dots,t_{p})$ such that $t_{i}\in\{1,2,\dots,m\}$ for
all $i=1,2,\dots,p$. For a given index vector $t=(t_{1},t_{2},\dots,t_{q})$ we
denote its \textit{length} by $p(t)=q,$ and define the operator $P[t]$ as the
product of the individual projections onto the sets whose indices appear in
the index vector $t$, namely,%
\begin{equation}
\;P[t]:=P_{t_{q}}P_{t_{q-1}}\cdots P_{t_{1}},
\end{equation}
and call it a \textit{string operator}. A finite set $\Omega$ of index vectors
is called \textit{fit} if for each $i\in\{1,2,\dots,m\}$, there exists a
vector $t=(t_{1},t_{2},\dots,t_{p})\in\Omega$ such that $t_{s}=i$ for some
$s\in\{1,2,\dots,p\}$. For each index vector $t$ the string operator is
nonexpansive, since the individual projections are, i.e.,
\begin{equation}
||P[t](x)-P[t](y)||\leq||x-y||\text{ for all }x,y\in H,
\end{equation}
and also%
\begin{equation}
P[t](x)=x\text{ for all }x\in C.
\end{equation}

Denote by $\mathcal{M}$ the collection of all pairs $(\Omega,w)$, where
$\Omega$ is a fit finite set of index vectors and%
\begin{equation}
w:\Omega\rightarrow(0,\infty)\text{ is such that }\sum_{t\in\Omega}w(t)=1.
\label{eq:1.6}%
\end{equation}

A pair $(\Omega,w)\in\mathcal{M}$ and the function $w$ were called in
\cite{bdhk07} an \textit{amalgamator} and a \textit{fit weight function},
respectively. For any $(\Omega,w)\in\mathcal{M}$ define the convex combination
of the end-points of all strings defined by members of $\Omega$ by%
\begin{equation}
P_{\Omega,w}(x):=\sum_{t\in\Omega}w(t)P[t](x),\;x\in H. \label{eq:1.7}%
\end{equation}
It is easy to see that%
\begin{equation}
||P_{\Omega,w}(x)-P_{\Omega,w}(y)||\leq||x-y||\text{ for all }x,y\in H,
\label{eq:1.8}%
\end{equation}
and that%
\begin{equation}
P_{\Omega,w}(x)=x\text{ for all }x\in C. \label{eq:1.9}%
\end{equation}
We will make use of the following bounded regularity condition, see
\cite[Definition 5.1]{bb96}.

\begin{condition}
\label{cond:A}For each $\varepsilon>0$ and each $M>0$ there exists
$\delta=\delta(\varepsilon,M)>0$ such that for each $x\in B(0,M)$ satisfying
$d(x,C_{i})\leq\delta$, $i=1,2,\dots,m,$ the inequality $d(x,C)\leq
\varepsilon$ holds.
\end{condition}

The next proposition follows from \cite[Proposition 5.4(iii)]{bb96} but we
present its proof here for the reader's convenience.

\begin{proposition}
If the space $H$ is finite-dimensional then Condition \ref{cond:A} holds.
\end{proposition}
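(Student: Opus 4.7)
The plan is to argue by contradiction using compactness. Suppose Condition \ref{cond:A} fails for some $\varepsilon_0>0$ and $M_0>0$. Then for every natural number $n$, taking $\delta=1/n$ yields a point $x_n\in B(0,M_0)$ with $d(x_n,C_i)\le 1/n$ for all $i=1,2,\dots,m$, while $d(x_n,C)>\varepsilon_0$.

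Since $H$ is finite-dimensional, the closed ball $B(0,M_0)$ is compact, so the sequence $\{x_n\}_{n=1}^{\infty}$ has a subsequence $\{x_{n_k}\}$ converging to some $x^{\ast}\in B(0,M_0)$. Here is where finite-dimensionality is essential; the same argument collapses in general Hilbert space because bounded sets need not be norm-compact.

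Next I would use the elementary fact that for any nonempty set $E\subseteq H$, the distance function $y\mapsto d(y,E)$ is $1$-Lipschitz, hence continuous. Applied to each $C_i$, this gives
\begin{equation}
d(x^{\ast},C_i)=\lim_{k\to\infty}d(x_{n_k},C_i)\le\lim_{k\to\infty}\frac{1}{n_k}=0,
\end{equation}
so, since $C_i$ is closed, $x^{\ast}\in C_i$ for every $i$, and therefore $x^{\ast}\in C=\cap_{i=1}^{m}C_i$. Consequently $d(x^{\ast},C)=0$.

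On the other hand, again by continuity of $y\mapsto d(y,C)$,
\begin{equation}
d(x^{\ast},C)=\lim_{k\to\infty}d(x_{n_k},C)\ge \varepsilon_0>0,
\end{equation}
contradicting $d(x^{\ast},C)=0$. This contradiction shows that Condition \ref{cond:A} must hold. No step looks like a genuine obstacle; the only conceptual input beyond standard manipulations is the Bolzano--Weierstrass-type extraction provided by finite-dimensionality.
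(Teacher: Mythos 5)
Your proof is correct and follows essentially the same route as the paper: negate the condition, extract a convergent (sub)sequence from the bounded sequence using finite-dimensional compactness, and derive a contradiction from the closedness of the $C_i$ and the continuity of the distance function. The only cosmetic difference is that you pass to a subsequence explicitly where the paper says ``without loss of generality.''
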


\begin{proof}
Assume to the contrary that there exist $\varepsilon>0$, $M>0$ and a sequence
$\{x^{k}\}_{k=0}^{\infty}\subset B(0,M)$ such that
\begin{equation}
\text{for each integer }k\geq1,\text{ }d(x^{k},C_{i})\leq1/k,\;i=1,2,\dots
,m\text{ and }d(x^{k},C)\geq\varepsilon. \label{eq:1.11}%
\end{equation}
We assume, without loss of generality, that there exists the limit%
\begin{equation}
\lim_{k\rightarrow\infty}x^{k}=\tilde{x}. \label{eq:1.12}%
\end{equation}
Then the closedness of $B(0,M)$ and (\ref{eq:1.11}) imply that%
\begin{equation}
\tilde{x}\in B(0,M)\cap C_{i},\;i=1,2,\dots,m.
\end{equation}
Together with (\ref{eq:1.1}) and (\ref{eq:1.12}) this implies that
$d(x^{k},C)<\varepsilon/2$ for all sufficiently large natural numbers $k$,
contradicting (\ref{eq:1.11}) and proving the proposition.
\end{proof}

In the sequel we will assume that Condition \ref{cond:A} holds. We fix a
number $\Delta\in(0,1/m)$ and an integer $\bar{q}\geq m$ and denote by
$\mathcal{M}_{\ast}\equiv\mathcal{M}_{\ast}(\Delta,\bar{q})$ the set of all
$(\Omega,w)\in\mathcal{M}$ such that the lengths of the strings are bounded
and the weights are all bounded away from zero, namely,%
\begin{equation}
\mathcal{M}_{\ast}:=\{(\Omega,w)\in\mathcal{M\mid}\text{ }p(t)\leq\bar
{q}\text{ and }w(t)\geq\Delta\text{ for all }t\in\Omega\}. \label{eq:11516}%
\end{equation}

The dynamic string-averaging projection (DSAP) method with variable strings
and variable weights is described by the following algorithm.

\begin{algorithm}
\label{alg:sap-v}$\left.  {}\right.  $\textbf{The DSAP method with variable
strings and variable weights}

\textbf{Initialization}: select an arbitrary $x^{0}\in H$,

\textbf{Iterative step}: given a current iteration vector $x^{k}$ pick a pair
$(\Omega_{k},w_{k})\in\mathcal{M}_{\ast}$ and calculate the next iteration
vector by%
\begin{equation}
x^{k+1}=P_{\Omega_{k},w_{k}}(x^{k})\text{.\label{eq:algv}}%
\end{equation}

\end{algorithm}

\section{Convergence analysis\label{sect:convergence}}

In this section we present our convergence analysis for the DSAP method with
variable strings and variable weights, Algorithm \ref{alg:sap-v}. The main
theorem is the following.

\begin{theorem}
\label{thm:1.1}Let the following assumptions hold:

(i) $M_{0}>0$ is such that%
\begin{equation}
B(0,M_{0})\cap C\not =\emptyset.
\end{equation}

(ii) $\varepsilon>0$, $M>0$ and $\delta>0$ are such that%
\begin{equation}
\text{ if }x\in B(0,2M_{0}+M)\text{ and }d(x,C_{i})\leq\delta,\;i=1,2,\dots
,m,\text{ then }d(x,C)\leq\varepsilon/4.
\end{equation}

(iii) $\gamma$ is a positive number that satisfies%
\begin{equation}
\bar{q}\gamma^{1/2}\leq\delta.
\end{equation}

(iv) $k_{0}$ is a natural number that satisfies%
\begin{equation}
k_{0}>(\gamma\Delta)^{-1}(M+M_{0})^{2}.
\end{equation}
Under these assumptions, if Condition \ref{cond:A} holds then any sequence
$\{x^{k}\}_{k=0}^{\infty}\subset H$, generated by Algorithm \ref{alg:sap-v}
with $||x^{0}||\leq M,$ converges in the norm of $H$, $\lim_{k\rightarrow
\infty}x^{k}\in C$ and%
\begin{equation}
||x^{k}-\lim_{s\rightarrow\infty}x^{s}||\leq\varepsilon\text{ for all integers
}k\geq k_{0}.
\end{equation}

\end{theorem}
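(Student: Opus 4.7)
The plan is to establish a sharpened Fej\'er monotonicity for $\{x^k\}$ with respect to $C$ that, at each step, charges a summable quantity to the partial movements along every string; then to use the bounded regularity condition (via assumption (ii)) to translate small string movement into nearness to $C$ itself; and finally to upgrade ``subsequential nearness to $C$'' to norm convergence by re-running the argument at arbitrarily small tolerance.

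Concretely, for any $z\in C$, convexity of $\|\cdot\|^2$ gives $\|z-x^{k+1}\|^2\leq \sum_{t\in\Omega_k} w_k(t)\|z-P[t](x^k)\|^2$, and denoting the intermediate iterates along a string $t$ by $y_s^{t,k}:=P_{t_s}\cdots P_{t_1}(x^k)$ and iterating Corollary~\ref{corr:1.1} yields
\[
\|z-P[t](x^k)\|^2+\sum_{s=1}^{p(t)}\|y_s^{t,k}-y_{s-1}^{t,k}\|^2\leq \|z-x^k\|^2.
\]
Averaging against the weights $w_k(t)$ and telescoping produces $\sum_{k\geq 0} S_k\leq \|z-x^0\|^2\leq (M+M_0)^2$ when $z\in B(0,M_0)\cap C$, where $S_k$ is the weighted sum of squared partial displacements. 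Assumption (iv) and pigeonhole then furnish an index $k^*\in\{0,\dots,k_0-1\}$ with $S_{k^*}<\gamma\Delta$, and the weight bound $w_{k^*}(t)\geq \Delta$ upgrades this to $\sum_{s=1}^{p(t)}\|y_s^{t,k^*}-y_{s-1}^{t,k^*}\|^2<\gamma$ for every string $t\in\Omega_{k^*}$. Fitness of $\Omega_{k^*}$ then gives, for each $i$, a string $t$ containing $i$ at some position $s_0\leq \bar q$; Cauchy--Schwarz together with $y_{s_0}^{t,k^*}\in C_i$ produces $d(x^{k^*},C_i)<\sqrt{\bar q\,\gamma}\leq \delta$ by assumption (iii). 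Fej\'er monotonicity places $x^{k^*}$ in $B(0,2M_0+M)$, so assumption (ii) yields $d(x^{k^*},C)\leq \varepsilon/4$; setting $u:=P_C(x^{k^*})\in C$, a further application of Fej\'er monotonicity gives $\|x^k-u\|\leq \varepsilon/4$ for all $k\geq k^*$.

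The remaining task, and the real subtlety, is promoting this to norm convergence: in infinite-dimensional $H$, Fej\'er monotonicity plus boundedness does not by itself force norm convergence. The idea is that every step above is valid for any tolerance $\varepsilon'>0$, with $\delta',\gamma',k_0'$ chosen correspondingly (Condition~\ref{cond:A} supplies a fresh $\delta'$ for the fixed radius $2M_0+M$). Thus for every $\varepsilon'>0$ there is a single point $u(\varepsilon')\in C$ that is $\varepsilon'/4$-close to all iterates past some index, forcing $\{x^k\}$ to be norm-Cauchy; its limit $\tilde x$ lies in $C$ since $d(x^k,C)\to 0$ and $C$ is closed. Passing to the limit in $\|x^l-u\|\leq \varepsilon/4$ gives $\|\tilde x-u\|\leq \varepsilon/4$, so the triangle inequality yields $\|x^k-\tilde x\|\leq \varepsilon/2<\varepsilon$ for all $k\geq k_0$.
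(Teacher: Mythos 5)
Your proposal is correct and follows essentially the same route as the paper: the same sharpened Fej\'er inequality charging the squared partial string displacements (the paper's Lemmas \ref{lem:2.1} and \ref{lem:2.2}), the same telescoping/pigeonhole step to find a good index before $k_0$, fitness plus the bound $\bar q$ to get $d(\cdot,C_i)\leq\delta$, assumption (ii) to get near $C$, and the ``re-run at every tolerance'' argument for the Cauchy property. The only cosmetic differences are that you use Cauchy--Schwarz where the paper uses the triangle inequality (giving the slightly sharper $\sqrt{\bar q\gamma}$ in place of $\bar q\sqrt{\gamma}$) and that you make explicit the choice of fresh $\delta',\gamma',k_0'$ for each $\varepsilon'$, which the paper leaves implicit in ``since $\varepsilon>0$ is arbitrary.''
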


We use the notation and the definitions from Section \ref{sect:SAPv} and prove
first the next two lemmas as tools for the proof of Theorem \ref{thm:1.1}.

\begin{lemma}
\label{lem:2.1}Let $t=(t_{1},t_{2},\dots,t_{p})$ be an index vector, $x\in H$
and $z\in C$. Then%
\begin{align}
||z-x||^{2}  &  \geq||z-P[t](x)||^{2}+||x-P_{t_{1}}(x)||^{2}\label{eq:2.1}\\
&  +\sum_{%
\begin{array}
[c]{c}%
{\scriptsize i=1}\\
{\scriptsize i<p}%
\end{array}
}^{p}\left\Vert P_{t_{i}+1}P_{t_{i}}\cdots P_{t_{1}}(x)-P_{t_{i}}\cdots
P_{t_{1}}(x)\right\Vert ^{2}.
\end{align}

\end{lemma}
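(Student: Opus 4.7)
The plan is to apply Corollary~\ref{corr:1.1} iteratively along the chain of successive projections that build up $P[t]$, and then telescope. Concretely, I would introduce the intermediate iterates
\begin{equation}
y_0 := x, \qquad y_i := P_{t_i} y_{i-1} = P_{t_i} P_{t_{i-1}} \cdots P_{t_1}(x), \quad i = 1, 2, \dots, p,
\end{equation}
so that $y_p = P[t](x)$, and observe that because $z \in C = \cap_{j=1}^{m} C_j$, in particular $z \in C_{t_i}$ for every $i$.

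The key step is then to apply Corollary~\ref{corr:1.1} with $D = C_{t_i}$, the arbitrary point replaced by $y_{i-1}$, and the point of $D$ taken to be $z$. Noting that $P_{t_i}(y_{i-1}) = y_i$, this yields, for each $i = 1, 2, \dots, p$,
\begin{equation}
\| z - y_i \|^2 + \| y_{i-1} - y_i \|^2 \leq \| z - y_{i-1} \|^2,
\end{equation}
which rearranges to
\begin{equation}
\| z - y_{i-1} \|^2 - \| z - y_i \|^2 \geq \| y_{i-1} - y_i \|^2.
\end{equation}

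Summing these inequalities for $i = 1, \dots, p$ makes the left-hand side telescope to $\| z - x \|^2 - \| z - P[t](x) \|^2$, while the right-hand side becomes $\sum_{i=1}^{p} \| y_{i-1} - y_i \|^2$. Separating off the first term $\| y_0 - y_1 \|^2 = \| x - P_{t_1}(x) \|^2$ and reindexing the remaining terms by $j = i - 1$ (so that $j$ ranges over $1, \dots, p-1$ and the summand becomes $\| P_{t_{j+1}} P_{t_j} \cdots P_{t_1}(x) - P_{t_j} \cdots P_{t_1}(x) \|^2$) delivers the stated inequality \eqref{eq:2.1}.

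This argument is essentially bookkeeping and there is no real obstacle: the only thing to be mildly careful about is that $z \in C$ (not merely in one $C_{t_i}$) is exactly what is needed to apply Corollary~\ref{corr:1.1} at every stage of the telescoping, and that the reindexing matches the form in which the lemma is stated.
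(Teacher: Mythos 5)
Your proof is correct and follows essentially the same route as the paper: the authors likewise apply Corollary \ref{corr:1.1} at each successive projection stage (using that $z\in C\subseteq C_{t_i}$) and combine the resulting inequalities telescopically. The only cosmetic difference is that you name the intermediate iterates $y_i$ explicitly, which the paper leaves implicit.
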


\begin{proof}
By Corollary \ref{corr:1.1},%
\begin{equation}
||z-x||^{2}\geq||z-P_{t_{1}}(x)||^{2}+||x-P_{t_{1}}(x)||^{2} \label{eq:2.2}%
\end{equation}
and using the same corollary we also have, for each integer $i$ satisfying
$1\leq i<p$,%
\begin{align}
||z-P_{t_{i}}\cdots P_{t_{1}}x||^{2}  &  \geq||z-P_{t_{i}+1}P_{t_{i}}\cdots
P_{t_{1}}(x)||^{2}\nonumber\\
&  +||P_{t_{i}+1}P_{t_{i}}\cdots P_{t_{1}}x-P_{t_{i}}\cdots P_{t_{1}}%
(x)||^{2}. \label{eq:2.3}%
\end{align}
Combining (\ref{eq:2.2}) and (\ref{eq:2.3}) we obtain (\ref{eq:2.1}) and the
lemma is proved.
\end{proof}

For an index vector $t=(t_{1},t_{2},\dots,t_{p(t)})$ and a vector $x\in H$ let
us define the function%
\begin{equation}
\phi\lbrack t](x):=||x-P_{t_{1}}(x)||^{2}+\sum_{%
\begin{array}
[c]{c}%
{\scriptsize i=1}\\
{\scriptsize i<p}%
\end{array}
}^{p}\left\Vert P_{t_{i}+1}P_{t_{i}}\cdots P_{t_{1}}(x)-P_{t_{i}}\cdots
P_{t_{1}}(x)\right\Vert ^{2}. \label{eq:2.5}%
\end{equation}
Lemma \ref{lem:2.1} and (\ref{eq:2.5}) then imply that, for each $x\in H$ and
each $z\in C$,%
\begin{equation}
||z-x||^{2}\geq||z-P[t](x)||^{2}+\phi\lbrack t](x). \label{eq:2.6}%
\end{equation}

\begin{lemma}
\label{lem:2.2}Let $x\in H,$ $z\in C$ and $(\Omega,w)\in\mathcal{M}_{\ast}.$
Then%
\begin{equation}
||z-x||^{2}\geq||z-P_{\Omega,w}(x)||^{2}+\Delta\sum_{t\in\Omega}\phi\lbrack
t](x). \label{eq:2.8}%
\end{equation}

\end{lemma}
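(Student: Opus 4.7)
The plan is to combine the single-string inequality (2.6) with the convexity of the squared norm via Jensen's inequality, and then use the lower bound $w(t)\geq\Delta$.

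First I would apply Lemma \ref{lem:2.1} in its rearranged form (2.6) to each individual index vector $t\in\Omega$, obtaining
\begin{equation*}
\|z-P[t](x)\|^{2}\leq\|z-x\|^{2}-\phi[t](x).
\end{equation*}
Multiplying this by the weight $w(t)>0$ and summing over $t\in\Omega$, while using $\sum_{t\in\Omega}w(t)=1$, yields
\begin{equation*}
\sum_{t\in\Omega}w(t)\,\|z-P[t](x)\|^{2}\leq\|z-x\|^{2}-\sum_{t\in\Omega}w(t)\,\phi[t](x).
\end{equation*}

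Next, since $P_{\Omega,w}(x)=\sum_{t\in\Omega}w(t)P[t](x)$ and the squared norm $\|\cdot\|^{2}$ is convex, Jensen's inequality gives
\begin{equation*}
\|z-P_{\Omega,w}(x)\|^{2}=\Bigl\|\sum_{t\in\Omega}w(t)\bigl(z-P[t](x)\bigr)\Bigr\|^{2}\leq\sum_{t\in\Omega}w(t)\,\|z-P[t](x)\|^{2}.
\end{equation*}
Chaining the two displayed inequalities produces
\begin{equation*}
\|z-P_{\Omega,w}(x)\|^{2}\leq\|z-x\|^{2}-\sum_{t\in\Omega}w(t)\,\phi[t](x).
\end{equation*}

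Finally, I would invoke the defining property of $\mathcal{M}_{\ast}$, namely that $w(t)\geq\Delta$ for every $t\in\Omega$, together with the observation that $\phi[t](x)\geq0$ (being a sum of squared norms by definition (\ref{eq:2.5})). This allows me to replace each $w(t)$ by the uniform lower bound $\Delta$, giving $\sum_{t\in\Omega}w(t)\,\phi[t](x)\geq\Delta\sum_{t\in\Omega}\phi[t](x)$, and rearranging yields exactly (\ref{eq:2.8}). There is no real obstacle here; the only delicate point is recognizing that the convexity bound via Jensen, rather than any deeper nonexpansiveness argument, is what converts the per-string estimate into an estimate for the averaged operator, after which the hypothesis $w(t)\geq\Delta$ does the rest.
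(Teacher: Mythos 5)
Your proposal is correct and follows essentially the same route as the paper: convexity of $u\mapsto\|u-z\|^{2}$ (Jensen) to pass from the averaged operator to the weighted sum, then the per-string inequality (\ref{eq:2.6}), and finally the bound $w(t)\geq\Delta$ from the definition of $\mathcal{M}_{\ast}$ together with $\phi[t](x)\geq0$. No gaps.
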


\begin{proof}
Since the function $u\rightarrow||u-z||^{2}$, $u\in H,$ is convex it follows
from (\ref{eq:1.6}), (\ref{eq:1.7}), (\ref{eq:2.5}), (\ref{eq:2.6}), and the
definition of $\mathcal{M}_{\ast}$ in (\ref{eq:11516}) that%
\begin{align}
||z-P_{\Omega,w}(x)||^{2}  &  =||z-\sum_{t\in\Omega}w(t)P[t](x)||^{2}\leq
\sum_{t\in\Omega}w(t)||z-P[t](x)||^{2}\nonumber\\
&  \leq\sum_{t\in\Omega}w(t)\left(  ||z-x||^{2}-\phi\lbrack t](x)\right)
=||z-x||^{2}-\sum_{t\in\Omega}w(t)\phi\lbrack t](x)\nonumber\\
&  \leq||z-x||^{2}-\Delta\sum_{t\in\Omega}\phi\lbrack t](x).
\end{align}
This implies (\ref{eq:2.8}) and completes the proof.
\end{proof}

Now we are ready to prove Theorem \ref{thm:1.1}.

\textbf{Proof of Theorem \ref{thm:1.1}.} We first wish to show that there is a
natural number $\ell\leq k_{0}$ such that%
\begin{equation}
\sum_{t\in\Omega_{\ell}}\phi\lbrack t](x^{\ell-1})\leq\gamma. \label{eq:2.9}%
\end{equation}
To this end we assume to the contrary, that%
\begin{equation}
\text{for all }k=1,2,\dots,k_{0},\text{ }\sum_{t\in\Omega_{k}}\phi\lbrack
t](x^{k-1})>\gamma, \label{eq2.10}%
\end{equation}
and take some $\theta\in B(0,M_{0})\cap C.$ By Lemma \ref{lem:2.2}, by the
fact that $(\Omega_{k},w_{k})\in\mathcal{M}_{\ast}$ for all $k\geq0,$ by
(\ref{eq:algv}) and by (\ref{eq2.10}), we have%
\begin{equation}
||\theta-x^{k-1}||^{2}\geq||\theta-x^{k}||^{2}+\Delta\sum_{t\in\Omega_{k}}%
\phi\lbrack t](x^{k-1})>||\theta-x^{k}||^{2}+\Delta\gamma.
\end{equation}
By the choice of $\theta$ and the fact that $||x^{0}||\leq M$ we obtain%
\begin{align}
(M+M_{0})^{2}  &  \geq||\theta-x^{0}||^{2}-||\theta-x^{k_{0}}||^{2}\nonumber\\
&  =\sum_{k=1}^{k_{0}}(||\theta-x^{k-1}||^{2}-||\theta-x^{k}||^{2})\geq
k_{0}\Delta\gamma
\end{align}
which implies%
\begin{equation}
k_{0}\leq(\Delta\gamma)^{-1}(M+M_{0})^{2}.
\end{equation}
This contradicts Assumption (iv) of the theorem thus showing that there exists
a natural number $\ell\leq k_{0}$ such that (\ref{eq:2.9}) holds.

From Lemma \ref{lem:2.2}, the choice of $\theta$, the fact that $(\Omega
_{k},w_{k})\in\mathcal{M}_{\ast}$ for all $k\geq0$, the iterative step
(\ref{eq:algv}) and $||x^{0}||\leq M$, we conclude that%
\begin{equation}
||\theta-x^{\ell-1}||^{2}\leq||\theta-x^{0}||^{2}\leq(M_{0}+M)^{2},
\end{equation}
which yields by the triangle inequality%
\begin{equation}
||x^{\ell-1}||\leq2M_{0}+M. \label{eq:2.12}%
\end{equation}
In order to use the last inequality to invoke Assumption (ii) of the theorem
we show next that%
\begin{equation}
d(x^{\ell-1},C_{i})\leq\delta,\;i=1,2,\dots,m.
\end{equation}
Assume that $s\in\{1,2,\dots,m\}$. Since the set $\Omega_{\ell}$ is fit there
is a $t=(t_{1},t_{2},\dots,t_{p(t)})\in\Omega_{\ell}$ such that%
\begin{equation}
s=t_{q}\text{ for some }q\in\{1,2,\dots,p(t)\}. \label{eq:2.14}%
\end{equation}
From (\ref{eq:2.9}) and the fact that $t\in\Omega_{\ell}$, we know that
$\phi\lbrack t](x^{\ell-1})\leq\gamma.$ Together with (\ref{eq:2.5}) this
implies that%
\begin{equation}
||x^{\ell-1}-P_{t_{1}}(x^{\ell-1})||\leq\gamma^{1/2}, \label{eq:2.15}%
\end{equation}
therefore, for any index $1\leq i\leq p(t)$ satisfying $i<p(t),$
\begin{equation}
||P_{t_{i+1}}P_{t_{i}}\cdots P_{t_{1}}(x^{\ell-1})-P_{t_{i}}\cdots P_{t_{1}%
}(x^{\ell-1})||\leq\gamma^{1/2}. \label{eq:2.16}%
\end{equation}
The fact that $(\Omega_{k},w_{k})\in\mathcal{M}_{\ast}$ for all $k\geq0$
guarantees that $p(t)\leq\bar{q},$ and the $\delta$ whose existence is
guaranteed by Condition \ref{cond:A}, Assumption (iii) of the theorem, along
with (\ref{eq:2.15}) and (\ref{eq:2.16}) imply that for any $i\in
\{1,\dots,p(t)\}$,%
\begin{equation}
||P_{t_{i}}\cdots P_{t_{1}}(x^{\ell-1})-x^{\ell-1}||\leq i\gamma^{1/2}\leq
p(t)\gamma^{1/2}\leq\bar{q}\gamma^{1/2}\leq\delta
\end{equation}
and thus that%
\begin{equation}
d(x^{\ell-1},C_{t_{i}})\leq\delta.
\end{equation}
Together with (\ref{eq:2.14}) this implies that%
\begin{equation}
d(x^{\ell-1},C_{s})\leq\delta. \label{eq:2.17}%
\end{equation}
Since (\ref{eq:2.17}) holds for any $s\in\{1,2,\dots,m\}$, we use
(\ref{eq:2.12}) and Assumption (ii) of the theorem to state that%
\begin{equation}
d(x^{\ell-1},C)\leq\varepsilon/4
\end{equation}
and that there is a $z\in H$ such that%
\begin{equation}
z\in C\text{ and}\;||x^{\ell-1}-z||<\varepsilon/3. \label{eq:2.18}%
\end{equation}
By (\ref{eq:1.8}), (\ref{eq:1.9}), (\ref{eq:2.18}), the fact that $(\Omega
_{k},w_{k})\in\mathcal{M}_{\ast}$ for all $k\geq0$ and the iterative step
(\ref{eq:algv}) we have%
\begin{equation}
||x^{k}-z||\leq||x^{\ell-1}-z||<\varepsilon/3\text{ for all integers }%
k\geq\ell-1. \label{eq:2.19}%
\end{equation}
This implies that for all integers $k_{1},k_{2}\geq\ell-1$ it is true that
$||x^{k_{1}}-x^{k_{2}}||<\varepsilon.$ Since $\varepsilon>0$ is arbitrary it
follows that $\{x^{k}\}_{k=0}^{\infty}$ is a Cauchy sequence and that the
limit $\lim_{k\rightarrow\infty}x^{k}$ in the norm exists. By (\ref{eq:2.19})%
\begin{equation}
||z-\lim_{k\rightarrow\infty}x^{k}||\leq\varepsilon/3.
\end{equation}
Since $\varepsilon>0$ is arbitrary it follows from (\ref{eq:2.18}) that
$\lim_{k\rightarrow\infty}x^{k}\in C$. By (\ref{eq:2.19}), since $\ell\leq
k_{0},$ and using (\ref{eq2.10}) for all integers $k\geq k_{0}$ we may write%
\begin{equation}
||x^{k}-\lim_{s\rightarrow\infty}x^{s}||\leq||x^{k}-z||+||z-\lim
_{s\rightarrow\infty}x^{s}||\leq\varepsilon/3+\varepsilon/3
\end{equation}
which completes the proof of Theorem \ref{thm:1.1}. $\blacksquare$

\section{Perturbation resilience of dynamic string-averaging with variable
strings and weights\label{sect:perturbations}}

In this section we prove the bounded perturbation resilience of the DSAP
method with variable strings and weights. We use the notations and the
definitions from the previous sections. The next definition was originally
given with a finite-dimensional Euclidean space $R^{J}$ instead of the Hilbert
space $H$ that we inserted into it below.

\begin{definition}
\label{def:resilient}\cite[Definition 1]{cdh10} Given a problem $T,$ an
algorithmic operator $\mathcal{A}:H\rightarrow H$ is said to be
\texttt{bounded perturbations resilient}\emph{ }if the following is true: if
the sequence $\{x^{k}\}_{k=0}^{\infty},$ generated by $x^{k+1}=\mathcal{A}%
(x^{k}),$ for all $k\geq0,$ converges to a solution of $T$ for all $x^{0}\in
H$, then any sequence $\{y^{k}\}_{k=0}^{\infty}$ of points in $H$ generated by
$y^{k+1}=\mathcal{A}(y^{k}+\beta_{k}v^{k}),$ for all $k\geq0,$ also converges
to a solution of $T$ provided that, for all $k\geq0$, $\beta_{k}v^{k}$ are
\texttt{bounded perturbations}, meaning that $\beta_{k}\geq0$ for all $k\geq0$
such that ${\displaystyle\sum\limits_{k=0}^{\infty}}\beta_{k}\,<\infty$ and
the sequence $\{v^{k}\}_{k=0}^{\infty}$ is bounded.
\end{definition}

We will make use of the following theorem that was proved in \cite[Theorem
3.2]{brz08}.

\begin{theorem}
\label{thm:brz08}Let $(Y,\rho)$ be a complete metric space, let $F\subset Y$
be a nonempty closed set, and let $T_{i}:Y\rightarrow Y$, $i=1,2,\dots,$
satisfy%
\begin{equation}
\rho(T_{i}(x),T_{i}(y))\leq\rho(x,y)\text{ for all }x,y\in Y\text{ and all
integers }i\geq1,
\end{equation}
and%
\begin{equation}
T_{i}(z)=z\text{ for each }z\in F\text{ and each integer }i\geq1.
\end{equation}
Assume that for each $x\in Y$ and integer $q\geq1$, the sequence
$\{T_{n}\cdots T_{q}(x)\}_{n=q}^{\infty}$ converges to an element of $F$. Let
$x^{0}\in Y$, $\{\gamma_{n}\}_{n=1}^{\infty}\subset(0,\infty)$, $\sum
_{n=1}^{\infty}\gamma_{n}<\infty$, $\{x^{n}\}_{n=0}^{\infty}\subset Y$, and
suppose that for all integers $n\geq0$,%
\begin{equation}
\rho(x^{n+1},T_{n+1}(x^{n}))\leq\gamma_{n+1}.
\end{equation}
Then the sequence $\{x^{n}\}_{n=0}^{\infty}$ converges to an element of $F$.
\end{theorem}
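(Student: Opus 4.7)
The plan is to combine the summable perturbation structure of $\{x^{n}\}$ with the hypothesized convergence of the unperturbed orbits $\{T_{n}\cdots T_{q}(x)\}_{n=q}^{\infty}$. First I would show that $\{x^{n}\}$ is bounded. Fix any $z\in F$. Since $T_{n+1}(z)=z$ and $T_{n+1}$ is nonexpansive,
\[
\rho(x^{n+1},z)\leq\rho(x^{n+1},T_{n+1}(x^{n}))+\rho(T_{n+1}(x^{n}),T_{n+1}(z))\leq\gamma_{n+1}+\rho(x^{n},z),
\]
so iterating yields $\rho(x^{n},z)\leq\rho(x^{0},z)+\sum_{k=1}^{\infty}\gamma_{k}<\infty$ for every $n$.

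Next, for each integer $q\geq 1$ I would introduce the auxiliary unperturbed orbit $y_{q}^{n}:=T_{n}T_{n-1}\cdots T_{q}(x^{q-1})$ for $n\geq q$, with $y_{q}^{q-1}:=x^{q-1}$. By induction on $n$, using nonexpansiveness of each $T_{i}$ together with $\rho(x^{n+1},T_{n+1}(x^{n}))\leq\gamma_{n+1}$, one obtains the telescoping estimate
\[
\rho(x^{n},y_{q}^{n})\leq\sum_{k=q}^{n}\gamma_{k}\quad\text{for all }n\geq q.
\]
By the standing hypothesis, $y_{q}^{n}\to z_{q}$ as $n\to\infty$ for some $z_{q}\in F$.

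Finally I would couple the two ingredients. Given $\varepsilon>0$, choose $q$ so that $\sum_{k=q}^{\infty}\gamma_{k}<\varepsilon/6$, and then choose $N\geq q$ so that $\rho(y_{q}^{n},z_{q})<\varepsilon/6$ for all $n\geq N$. For $n_{1},n_{2}\geq N$ the triangle inequality gives
\[
\rho(x^{n_{1}},x^{n_{2}})\leq\rho(x^{n_{1}},y_{q}^{n_{1}})+\rho(y_{q}^{n_{1}},z_{q})+\rho(z_{q},y_{q}^{n_{2}})+\rho(y_{q}^{n_{2}},x^{n_{2}})<\varepsilon,
\]
so $\{x^{n}\}$ is Cauchy. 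Completeness of $Y$ then produces a limit $x^{\ast}$, and the same estimate with $n_{2}\to\infty$ yields $\rho(x^{\ast},z_{q})\leq\varepsilon/3$, so $d(x^{\ast},F)$ can be made arbitrarily small; closedness of $F$ then forces $x^{\ast}\in F$. The main delicate step is the inductive telescoping bound relating the perturbed orbit $x^{n}$ to the clean orbit $y_{q}^{n}$ while the operators $T_{i}$ vary with $i$; the conceptual heart of the argument is the idea of coupling $\{x^{n}\}$ to a clean tail whose starting time $q$ can be pushed forward to absorb any prescribed amount of perturbation, which is precisely what summability of $\{\gamma_{n}\}$ enables.
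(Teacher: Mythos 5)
Your proof is correct and complete. Note that the paper itself does not prove this theorem; it is imported verbatim from Butnariu, Reich and Zaslavski \cite{brz08}, Theorem 3.2, so there is no in-paper argument to compare against. Your route --- coupling the perturbed sequence $\{x^{n}\}$ to the clean orbit $y_{q}^{n}=T_{n}\cdots T_{q}(x^{q-1})$ via the telescoping bound $\rho(x^{n},y_{q}^{n})\leq\sum_{k=q}^{n}\gamma_{k}$, then pushing the restart time $q$ forward so that the residual perturbation mass is small --- is exactly the standard mechanism behind such stable-convergence results for infinite products of nonexpansive maps, and every step checks out: the base case $\rho(x^{q},T_{q}(x^{q-1}))\leq\gamma_{q}$, the inductive step using nonexpansiveness of $T_{n+1}$, the Cauchy estimate, and the passage to $x^{\ast}\in F$ via closedness of $F$. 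One small remark: your first step (boundedness of $\{x^{n}\}$ via a fixed $z\in F$) is never actually used in the remainder of the argument and could be omitted.
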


The next theorem establishes the bounded perturbations resilience of DSAP.

\begin{theorem}
\label{thm:4.1}Let $C_{1},C_{2},\dots,C_{m}$ be nonempty closed convex subsets
of $H,$ where $m$ is a natural number, $C:=\cap_{i=1}^{m}C_{i}\not =\emptyset
$, let $\{\beta_{k}\}_{k=0}^{\infty}$ be a sequence of nonnegative numbers
such that $\sum_{k=0}^{\infty}\beta_{k}<\infty$, let $\{v^{k}\}_{k=0}^{\infty
}\subset H$ be a norm bounded sequence, let $\{(\Omega_{k},w_{k}%
)\}_{k=1}^{\infty}\subset\mathcal{M}_{\ast},$ for all $k\geq0,$ and let
$x^{0}\in H.$ Then any sequence $\{x_{k}\}_{k=0}^{\infty},$ generated by
Algorithm \ref{alg:sap-v} in which (\ref{eq:algv}) is replaced by%
\begin{equation}
x^{k+1}=P_{\Omega_{k},w_{k}}(x^{k}+\beta_{k}v^{k}%
)\text{,\label{eq:algv-perturbed}}%
\end{equation}
converges in the norm of $H$ and its limit belongs to $C$.
\end{theorem}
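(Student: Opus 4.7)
The plan is to reduce Theorem \ref{thm:4.1} to a direct application of Theorem \ref{thm:brz08} with the complete metric space $(Y,\rho):=(H,||\cdot||)$, the nonempty closed set $F:=C$, and the operators defined by $T_{i}:=P_{\Omega_{i-1},w_{i-1}}$ for $i=1,2,\dots$. Once the three structural hypotheses of Theorem \ref{thm:brz08} are verified and the perturbation discrepancy is shown to be summable, the conclusion that $\{x^{k}\}_{k=0}^{\infty}$ converges in norm to an element of $C$ follows immediately.

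The two static hypotheses on the $T_{i}$ are free from what has already been established in Section \ref{sect:SAPv}: nonexpansivity $||T_{i}(x)-T_{i}(y)||\leq||x-y||$ is precisely (\ref{eq:1.8}), and the fixed-point property $T_{i}(z)=z$ for every $z\in C$ is (\ref{eq:1.9}).

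The dynamic hypothesis -- that for each $x\in H$ and each integer $q\geq1$ the sequence $\{T_{n}\cdots T_{q}(x)\}_{n=q}^{\infty}$ converges to an element of $C$ -- is the substantive point. I would verify it by fixing $x$ and $q$ and observing that this sequence is exactly the one produced by Algorithm \ref{alg:sap-v} started from $x$ and driven by the tail amalgamator sequence $(\Omega_{q-1},w_{q-1}),(\Omega_{q},w_{q}),\dots$, all of which lie in $\mathcal{M}_{\ast}$ by hypothesis. Choosing any $\theta\in C$ and setting $M_{0}:=||\theta||$, $M:=||x||$, the standing Condition \ref{cond:A} furnishes, for each $\varepsilon>0$, a $\delta>0$ satisfying assumption (ii) of Theorem \ref{thm:1.1}; then $\gamma:=(\delta/\bar{q})^{2}$ satisfies (iii) and any integer $k_{0}>(\gamma\Delta)^{-1}(M+M_{0})^{2}$ satisfies (iv). Theorem \ref{thm:1.1} therefore yields norm convergence of $\{T_{n}\cdots T_{q}(x)\}$ to a limit in $C$, as required.

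Finally, the perturbation discrepancy is controlled by nonexpansivity: from (\ref{eq:1.8}),
\begin{equation}
||x^{k+1}-T_{k+1}(x^{k})||=||P_{\Omega_{k},w_{k}}(x^{k}+\beta_{k}v^{k})-P_{\Omega_{k},w_{k}}(x^{k})||\leq\beta_{k}||v^{k}||.
\end{equation}
Setting $\gamma_{k+1}:=\beta_{k}||v^{k}||$ and choosing $B>0$ with $||v^{k}||\leq B$ for all $k$, one gets $\sum_{k=1}^{\infty}\gamma_{k}\leq B\sum_{k=0}^{\infty}\beta_{k}<\infty$. Theorem \ref{thm:brz08} then applies verbatim and delivers the conclusion. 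The main obstacle, as indicated, is the verification of the dynamic hypothesis: one must recognize that every tail of the amalgamator sequence, started from an arbitrary vector $x\in H$, still satisfies the hypotheses of Theorem \ref{thm:1.1}, so that the asymptotic regularity built into the unperturbed convergence theorem can be invoked uniformly over all starting indices $q$.
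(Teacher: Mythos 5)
Your proposal is correct and takes exactly the route the paper intends: the paper's own proof of Theorem \ref{thm:4.1} is the single sentence that it ``follows from Theorem \ref{thm:1.1} and from Theorem \ref{thm:brz08},'' and you have supplied precisely the missing details -- identifying $T_{i}=P_{\Omega_{i-1},w_{i-1}}$, checking nonexpansivity and the fixed-point property via (\ref{eq:1.8})--(\ref{eq:1.9}), using Theorem \ref{thm:1.1} on each tail of the amalgamator sequence to verify the infinite-product hypothesis, and bounding the discrepancy by $\beta_{k}||v^{k}||$. The only cosmetic point is that Theorem \ref{thm:brz08} asks for $\gamma_{n}>0$, so one should take, say, $\gamma_{k+1}=\beta_{k}||v^{k}||+2^{-k}$ to cover the case $\beta_{k}||v^{k}||=0$; this changes nothing.
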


\begin{proof}
The proof follows from Theorem \ref{thm:1.1} and from Theorem \ref{thm:brz08}.
\end{proof}

We conclude with a comment about the importance and relevance of this bounded
perturbation resilience to the recently developed superiorization methodology.
The superiorization methodology was first proposed (although without using
this term) in \cite{bdhk07} and subsequently investigated and developed
further in \cite{cdh10,dhc09,hd08,hgdc12,ndh12,pscr10}. For the case of a
minimization problem of an objective function $\phi$ over a family of
constraints $\{C_{i}\}_{i=1}^{m},$ where each set $C_{i}$ is a nonempty closed
convex subset of $R^{n}$ it works as follows. It applies to $C=\cap_{i=1}%
^{m}C_{i}$ a feasibility-seeking \textit{projection method} capable of using
projections onto the individual sets $C_{i}$ in order to generate a sequence
$\{x^{k}\}_{k=0}^{\infty}$ that converges to a point $x^{\ast}\in C.$ It
applies to $C$ only such a feasibility-seeking projection method which is
bounded perturbation resilient. Doing so, the superiorization method exploits
this perturbation resilience to perform objective function reduction steps by
doing negative subgradient moves with certain step sizes. Thus, in
superiorization the feasibility-seeking algorithm leads the overall process
and uses permissible perturbations, that do not spoil the feasibility-seeking,
to periodically jump out from the overall process to do the subgradient
objective function reduction step.

This has a great potential computational advantage and poses also interesting
mathematical questions. It has been shown to be advantageous in some
real-world problems in image reconstruction from projections, see the above
mentioned references. Theorem \ref{thm:4.1} here, which established the
bounded perturbations resilience of DSAP methods, makes it now possible to
use, when the need arises, also DSAP methods within the framework of the
superiorization heuristic methodology.\medskip

\textbf{Acknowledgments}. We gratefully acknowledge enlightening discussions
with Gabor Herman on the topics discussed in this paper. The work of the first
author was supported by the United States-Israel Binational Science Foundation
(BSF) Grant number 200912 and US Department of Army Award number W81XWH-10-1-0170.

\end{document}